\numberwithin{equation}{section}
\theoremstyle{plain}
\newtheorem{theorem}{Theorem}[section]
\newtheorem{lemma}[theorem]{Lemma}
\newtheorem{corollary}[theorem]{Corollary}
\def\mod#1{{\ifmmode\text{\rm\ (mod~$#1$)}
\else\discretionary{}{}{\hbox{ }}\rm(mod~$#1$)\fi}}
\theoremstyle{definition}
\newtheorem{example}{Example}[section]
\theoremstyle{remark}
\newtheorem{remark}{Remark}[section]
\newcommand{\al}{\alpha}
\newcommand{\be}{\beta}
\newcommand{\ze}{\zeta}
\newcommand{\la}{\lambda}
\journal{Linear Algebra and its Applications}
\begin{document}

\begin{frontmatter}
\title{On the Waring Rank of Binary Forms}
\author{Neriman Tokcan}
\address{Department of Mathematics, University of 
Illinois at Urbana-Champaign, Urbana, IL 61801} 
\ead{tokcan2@illinois.edu}
\ead[url]{http://rtsl-edge.cs.illinois.edu/nerimanno/}
\date{October 31, 2016}

\begin{abstract}
The $K$-rank of a binary form $f$ in $K[x,y],~K\subseteq \mathbb{C},$  is the smallest number of $d$-th powers of linear forms over $K$ of which $f$ is a $K$-linear combination. We provide  lower bounds for the $\mathbb{C}$-rank (Waring rank)  and for the $\mathbb{R}$-rank (real Waring rank) of binary forms depending on their factorization. We study binary forms with unique $\mathbb{C}$-minimal representation.
\end{abstract}

\begin{keyword}
Waring rank \sep real rank \sep binary forms \sep sums of powers \sep  Sylvester, tensor decompositions
\MSC[2010] 11E76 \sep 11P05 \sep 12D15 \sep 14N10
\end{keyword}

\end{frontmatter}

\section{Introduction}\label{1}

The main results of this work concern symmetric tensor decomposition which is also known as the Waring problem for forms. Tensors have a rich history and they have recently become ubiquitous in signal processing, statistics, data mining and machine learning~\cite{CG,KB,LT1}.

Let $K[x,y]_d$ denote the vector space of binary forms of degree $d$ with coefficients in the field $K \subseteq \mathbb{C}.$ Given a binary form $f \in K[x,y]_d,$  the $K$-rank of $f,~ L_K(f),$ is the smallest $r$ for which there exist $ \la_j, \al_j, \be_j \in K$ such that  
\begin{equation}\label{E:basic}
f(x,y) = \sum_{j=1}^r \la_j\bigl(\al_{j}x + \be_j y\bigr)^d.
\end{equation}
 A  representation such as \eqref{E:basic}  is
{\it   honest} if the summands are pairwise distinct; that is, if
$\la_i\la_j(\al_i\be_j-\al_j\be_i) \neq 0$ whenever $i \neq j$. Any
representation in which $r = L_K(f)$ is necessarily honest.
In case $K=\mathbb{C}$ or $\mathbb{R},$ the $K$-rank is  commonly called the  Waring rank or  the real Waring rank. Sylvester \cite{S1,S2} presented an algorithm to compute $L_\mathbb{C}(f)$ in 1851 and gave a lower bound for the real Waring rank in 1864. The Waring rank of binary forms has been studied extensively \cite{AH,BBS,CCG,CS,K2,LT,Re1}.  Recently the real Waring rank of binary forms has been investigated \cite{BS,BCG,CKOV,CR,CO}. The relative Waring rank of binary forms over some intermediate fields of  $\mathbb{C} / \mathbb{Q}$ was analyzed in \cite{ Re1,RN}.

It has been known for a long time that $L_\mathbb{C}(f) \leq deg(f).$ This still holds when the underlying field varies, that is, $f \in K[x,y]$ implies $L_K(f) \leq deg(f)$ for any $K \subseteq \mathbb{C}$ \cite[Theorem 4.10]{Re1}. The relation between the number of real roots and the real Waring rank of binary forms has also received substantial attention. Extending the work of Sylvester, Reznick showed that  if $f(x,y)$ is a binary form of degree $d$, not a $d$-th power, with $\tau$ real roots (counting multiplicities), then $L_\mathbb{R}(f) \geq \tau$ \cite[Theorem 3.2]{Re1};  if $f$ is hyperbolic, that is $\tau=d,$ then $L_\mathbb{R}(f)=d$ \cite[Corollary 4.11]{Re1}. The converse was conjectured and proved for $d \leq 4$ in \cite{Re1}. Causa and Re \cite{CR} and Comon and Ottaviani \cite{CO}  showed that the conjecture holds for any square-free binary form, and recently  Blekherman and Sinn \cite{BS} proved that the conjecture is true for any binary form. Reznick presented a classification of binary forms of Waring rank 1 and 2 \cite{Re1}. 

In this paper,  we provide a lower bound for the Waring rank of binary forms based on their factorization over $\mathbb{C}$ (Theorem \ref{complex_bound}). This result also improves the above-mentioned lower bound for  the real Waring rank of binary forms (Corollary \ref{real_bound}). Additionally, we give the classification of binary forms with unique minimal representation (Theorem \ref{classification}) and provide supporting examples Section \ref{5}.

We now outline the remainder of the paper.

In Section \ref{2}, after briefly  discussing apolarity, we recall Sylvester's 1851 Theorem (Theorem \ref{Sylvester}) and the Apolarity Lemma (Theorem \ref{apolarity}). We review the result that  if $f \in K[x,y]_d$ and $k  <\frac{d+2}{2}$ such that $(f^{\perp})_{k-1}=\{0\},$  then $(f^{\perp})_k$ is generated by a projectively unique form  in $K[x,y]$ (Corollary \ref{unique_apolar}). Let $f$ be a binary form of degree $d$ and $L_K(f)=d$, we say that  $f$ has \textit{full $K$-rank}. We include a well-known result on the  binary forms of full Waring rank (Theorem \ref{full_rank}) and a recent result on the real case  (Theorem \ref{full_real_rank}). We conclude this section by recalling the classification of binary forms of Waring rank 1 and 2 which was given in \cite{Re1}. We apply these theorems and observations in Section \ref{3} and Section \ref{4}.

In Section \ref{3}, we first show that if $f$ is a binary form of degree $d,$ not a $d$-th power, and $\alpha_i$ is  a root of  multiplicity $m_i $ of $f,$ then $L_\mathbb{C}(f) \geq m_i +1$ (Theorem \ref{complex_bound}). It directly follows that $L_{\mathbb{C}}({\ell_0}^{d-2}\ell_1\ell_2)=L_\mathbb{R}({\ell_0}^{d-2} p)=d-1$ where $\ell_i$'s are distinct binary linear forms and $p$ is an irreducible quadratic (Corollaries \ref{complex} and \ref{real}). Theorem \ref{complex_bound} combines with \cite[Theorem 3.2]{Re1} into Corollary \ref{real_bound}: if $f$ is a real binary form of degree $d,$ not a $d$-th power, with $\tau$ real roots (counting multiplicities), and $\alpha_i$ is a root of multiplicity $m_i$ of $f,$  then $L_{\mathbb{R}}(f) \geq \max (\tau, m_i +1).$ We then  show that if  $f_\lambda(x,y)=x^{2k}+\binom{2k}{k}\lambda x^k y^k+y^{2k},~ \lambda \neq 0 \in \mathbb{R},~k\geq 3,$ then $L_\mathbb{R}(f_\lambda) \in \{2k-2,2k-1\}$ (Theorem \ref{definite_rank}). The minimal representations of $f_\lambda$ are parameterized in Theorem \ref{definite_rep}. 

In Section \ref{4}, we study binary forms with unique minimal representation. We show that if $f \in K[x,y]_d$ and $L_{\mathbb{C}}(f)=r < \frac{d+2}{2}$, then there exist a field extension $S/K$ such that $L_S(f)=r$ and $[S:K]$ divides $r!$ (Theorem \ref{classification}). We then look at the special case when the underlying field $K$ is a real closed field and $r=3$ (Corollary \ref{real_closed}). 

In Section \ref{5}, we  give examples of binary forms of Waring rank 3 by considering  different cases corresponding to field extensions given in Theorem \ref{classification} and an additional example of a binary quartic form with infinitely many minimal representations of length 3 (Example~\ref{quartic}).

We thank Bruce Reznick for his helpful directions that played a crucial role for the completion of this work. This work will constitute a portion of the author's doctoral dissertation.


\section{Tools and Background}\label{2}
Suppose $p(x,y)= \sum\limits_{i=1}^{d} a_i x^{d-i}y^{i} \in \mathbb{C}[x,y]_d.$ The differential operator associated to $p$ is given by 
\begin{equation*} 
p(D)= \sum\limits_{i=1}^{d} a_i \frac{\partial^d}{\partial x^{d-i}  \partial y ^{i}}~.
\end{equation*}
The \textit {apolar ideal} of $p$, which is denoted by $p^{\perp}$, is the set of all binary forms whose differential operator kills $p$, that is,
\begin{equation*}
p^{\perp} =\{ h \in \mathbb{C}[x,y]~|~ h(D) p = 0 \}. 
\end{equation*} 
This is a homogeneous ideal with the decomposition
\begin{eqnarray*}
p^{\perp}&= & \bigoplus_{ k \geq 0} (p^{\perp} ) _k, \\
(p^{\perp})_k&=&\{ h \in \mathbb{C}[x,y]_k~|~ h(D) p = 0 \}. 
\end{eqnarray*} 

The following theorem is proved in \cite{Re1}, and for $K=\mathbb{C}$, is due to Sylvester \cite{S1,S2} in 1851.
\begin{theorem}\label{Sylvester} \cite[Theorem 2.1,Corollary 2.2]{Re1} Suppose $K\subseteq \mathbb{C}$ is a field,

\begin{equation}
p(x,y)= \sum\limits_{i=1}^{d}\binom di a_i x^{d-i} y^{i}  \in K[x,y]_d
\end{equation}
and  suppose $r \leq d, \alpha_j, \beta_j \in K $ and 

\begin{equation}
h(x,y)= \sum\limits_{t=0}^{r} c_t x^{r-t} y^{t} = \prod \limits _ {j=1}^{r} ( -\beta_j x + \alpha_j y ) 
\end{equation}
is a product of pairwise distinct linear factors. Then there exist $\lambda_k \in K$ such that 

\begin{equation}
p(x,y) = \sum\limits_{k=1}^{r}  \lambda_k ( \alpha_k x + \beta_k y ) ^d
\end{equation}

\noindent if and only if 

\noindent   \begin{equation} \label{linear_system}\left (\begin{array}{cccc}
a_0 &  a_1 & \ldots & a_r  \\
a_1 & a_2 & \ldots & a_{r+1} \\
\vdots & \vdots & \ddots & \vdots\\
a_{d-r} & \ a_{ d- r +1} & \ldots & a_d  \end{array} \right) 
\left( \begin{array}{c}
c_0 \\
c_1 \\
\vdots\\
c_r
 \end{array} \right)=\left( \begin{array}{c}
0 \\
0 \\
\vdots \\
0 \end{array} \right); \end{equation} 

\noindent that is, if and only if 

\begin{equation}
 h(D) p= 0.
\end{equation}
\end{theorem}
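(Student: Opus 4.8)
The plan is to prove the two stated equivalences separately: the purely computational equivalence between the Hankel matrix equation and the condition $h(D)p=0$, and the substantive equivalence between $h(D)p=0$ and the existence of a Waring decomposition supported on the factors of $h$. The engine behind everything is the single differential identity
\[
h(D)(\alpha x+\beta y)^d=\frac{d!}{(d-r)!}\,h(\alpha,\beta)\,(\alpha x+\beta y)^{d-r},
\]
which I would obtain at once from $\frac{\partial^r}{\partial x^{r-t}\partial y^t}(\alpha x+\beta y)^d=\frac{d!}{(d-r)!}\alpha^{r-t}\beta^t(\alpha x+\beta y)^{d-r}$ and summing against the coefficients $c_t$ of $h$.

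For the computational equivalence I would expand $h(D)p$ in the monomial basis using $p=\sum_i\binom di a_i x^{d-i}y^i$. A short calculation shows that the coefficient of $x^{d-r-s}y^s$ in $h(D)p$ equals $\binom{d-r}{s}\frac{d!}{(d-r)!}\sum_{t=0}^r c_t a_{s+t}$; the prefactor is nonzero, and $\sum_t c_t a_{s+t}$ is exactly the $s$-th entry of the Hankel matrix times the vector $(c_0,\dots,c_r)^{T}$. Letting $s$ run over $0,\dots,d-r$ identifies the matrix equation with $h(D)p=0$ term by term, so this step is routine bookkeeping rather than a genuine obstacle.

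For the forward direction, if $p=\sum_{k=1}^r\lambda_k(\alpha_k x+\beta_k y)^d$ then the displayed identity gives $h(D)p=\frac{d!}{(d-r)!}\sum_k\lambda_k h(\alpha_k,\beta_k)(\alpha_k x+\beta_k y)^{d-r}$, and each $h(\alpha_k,\beta_k)=\prod_j(\alpha_j\beta_k-\beta_j\alpha_k)$ vanishes because its $j=k$ factor is $0$; hence $h(D)p=0$. The converse is where the real work lies. I would factor $h(D)=\ell_1'(D)\circ\cdots\circ\ell_r'(D)$, where $\ell_j'=-\beta_j x+\alpha_j y$; since each nonzero first-order operator $\ell_j'(D)\colon\mathbb{C}[x,y]_m\to\mathbb{C}[x,y]_{m-1}$ has one-dimensional kernel (spanned by $(\alpha_j x+\beta_j y)^m$) and is therefore surjective, the commuting composite $h(D)\colon\mathbb{C}[x,y]_d\to\mathbb{C}[x,y]_{d-r}$ is surjective, so $\dim\ker h(D)=r$. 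The $r$ powers $(\alpha_k x+\beta_k y)^d$ lie in this kernel by the forward computation and are linearly independent because the $\ell_k$ are pairwise non-proportional and $r\le d+1$ (a generalized Vandermonde argument); hence they form a basis of $\ker h(D)$, and $h(D)p=0$ forces $p=\sum_k\lambda_k(\alpha_k x+\beta_k y)^d$ with $\lambda_k\in\mathbb{C}$.

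The last and subtlest point is that the $\lambda_k$ can be taken in $K$. Reading off coefficients, the $\lambda_k$ must solve the linear system $a_i=\sum_k\lambda_k\alpha_k^{d-i}\beta_k^i$ for $0\le i\le d$, whose coefficient matrix and right-hand side have entries in $K$ and whose columns are independent. Solvability over $\mathbb{C}$ together with full column rank means the solution is unique; extracting an invertible $r\times r$ subsystem with entries in $K$ and inverting it over $K$ produces a solution in $K^r$, which by uniqueness coincides with the one already found over $\mathbb{C}$. I expect this field-descent step, together with the surjectivity and kernel-dimension count, to be the main conceptual hurdle, whereas the identification of the matrix equation with $h(D)p=0$ is purely mechanical.
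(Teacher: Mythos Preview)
The paper does not give its own proof of this statement; Theorem~2.1 is quoted verbatim as \cite[Theorem 2.1, Corollary 2.2]{Re1} and used as background, so there is nothing in the paper to compare against line by line. Your argument is correct and is essentially the classical Sylvester--Gundelfinger proof as presented in Reznick's paper: the differential identity $h(D)(\alpha x+\beta y)^d=\tfrac{d!}{(d-r)!}h(\alpha,\beta)(\alpha x+\beta y)^{d-r}$, the bookkeeping that identifies the Hankel system with $h(D)p=0$, the dimension count $\dim\ker h(D)=r$ via surjectivity of each linear factor, and the Vandermonde independence of the $d$-th powers are exactly the standard ingredients. The field-descent step you flag as ``subtlest'' is also the point of Reznick's Corollary~2.2, and your argument (unique solution of a full-column-rank system with entries in $K$ must lie in $K^r$) is the same one used there.
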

If (\ref{linear_system}) holds and $h$ is  square-free, then we say that $h$ is a \textit{Sylvester form} of degree $r$ for $p$. 
Note that a Sylvester form is necessarily separable.

\begin{remark} Let $E_p$ denote the field generated by the coefficients of $p$ over $\mathbb{C}.$ If $h \in K[x,y]$ is a Sylvester form for $p,$ then $E_{p} \subseteq K.$ 
\end{remark}
It is known that any bivariate apolar ideal is a complete intersection ideal and the converse also holds.
\begin{theorem}\cite[Theorem 1.44(iv)]{IK}\label{apolarity}
Let $ p(x,y) \in \mathbb{C}[x,y]_d$. Then $p^{\perp}$ is a complete intersection ideal over $\mathbb{C},$ i.e. $p^{\perp}= \langle f,g \rangle$ such that $deg(f) + deg(g) = d+2$ and $V_\mathbb{C}(f,g) =\emptyset.$ Also, any two such binary forms $f,g$ generate an ideal $p^{\perp}$  for a binary form $p$  of degree $deg(f) + deg(g) -2 $.
\end{theorem}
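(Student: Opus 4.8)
The plan is to realize the apolar algebra $A = \mathbb{C}[x,y]/p^{\perp}$ as a graded Artinian Gorenstein algebra and then invoke the special feature of two variables, namely that in codimension $2$ the Gorenstein Artinian quotients are exactly the complete intersections. First I would record that $A$ is Artinian: every operator of degree $>d$ kills $p$, so $(p^{\perp})_k = \mathbb{C}[x,y]_k$ for $k>d$ and $A$ is finite-dimensional with top degree $d$. The apolarity pairing $\mathbb{C}[x,y]_k \times \mathbb{C}[x,y]_{d-k}\to\mathbb{C}$, $(h,g)\mapsto (hg)(D)p$, descends to $A_k \times A_{d-k}\to\mathbb{C}$ (since $(hg)(D)p = g(D)\bigl(h(D)p\bigr)$), and it is perfect by nondegeneracy of the basic apolarity pairing. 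Hence $A_d\cong\mathbb{C}$ and the socle of $A$ is one-dimensional, concentrated in degree $d$; this is precisely the assertion that $A$ is Gorenstein with socle degree $d$.

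Concretely, the Gorenstein symmetry can be read off the catalecticant matrices of Theorem 2.1: $(p^{\perp})_k$ is the kernel of the $(d-k+1)\times(k+1)$ Hankel matrix $\mathrm{Cat}_k(p)$, and since $\mathrm{Cat}_{d-k}(p)$ is the transpose of $\mathrm{Cat}_k(p)$ the two have equal rank. Thus the Hilbert function $H(k)=\dim A_k = (k+1)-\dim(p^{\perp})_k$ satisfies $H(k)=H(d-k)$. If $r$ is the least degree in which $p^{\perp}$ is nonzero, then $H(k)=k+1$ for $k<r$, so by symmetry $H(k)=d-k+1$ for $k>d-r$; and for any $0\neq f\in(p^{\perp})_r$ the inclusion $fR\subseteq p^{\perp}$ gives $\dim(p^{\perp})_k\ge k-r+1$, whence $H(k)\le r$ once $k\ge r$. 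The Hilbert function is therefore trapezoidal.

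The key step is the passage from Gorenstein to complete intersection, for which I would apply the Hilbert--Burch theorem: a codimension-$2$ Cohen--Macaulay (here Artinian) quotient $R/I$ has a minimal free resolution $0\to F_2\to F_1\to R\to R/I\to 0$ with $\mathrm{rank}\,F_1=\mathrm{rank}\,F_2+1$, and $I$ is the ideal of maximal minors of $F_2\to F_1$. The one-dimensional socle forces the resolution to be self-dual up to a twist, hence $\mathrm{rank}\,F_2=1$ and $\mathrm{rank}\,F_1=2$; that is, $p^{\perp}=\langle f,g\rangle$ and the resolution is the Koszul complex $0\to R(-\deg f-\deg g)\to R(-\deg f)\oplus R(-\deg g)\to R\to A\to 0$. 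Comparing Hilbert series gives $\deg f+\deg g-2=d$, i.e. $\deg f+\deg g=d+2$. Finally $V_{\mathbb{C}}(f,g)=\emptyset$, for a common linear factor $\ell$ of $f$ and $g$ would yield $p^{\perp}\subseteq\langle\ell\rangle$, making $A$ surject onto the infinite-dimensional ring $\mathbb{C}[x,y]/\langle\ell\rangle$ and contradicting that $A$ is Artinian. (Equivalently, the generators and their degrees $r$ and $d+2-r$ can be extracted directly from the jumps of the trapezoidal Hilbert function.)

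For the converse I would run the correspondence backwards. Given coprime $f,g$ with $V_{\mathbb{C}}(f,g)=\emptyset$, they form a regular sequence in $R=\mathbb{C}[x,y]$, so $I=\langle f,g\rangle$ is a complete intersection, Artinian, and Gorenstein, and its Koszul resolution places the socle in degree $\deg f+\deg g-2$. By Macaulay's inverse systems the graded dual of $R/I$ is a cyclic module $R\cdot p$ for a form $p$, unique up to scalar, of that degree, and then $\mathrm{Ann}(p)=p^{\perp}=I$. I expect the main obstacle to be the Gorenstein-implies-complete-intersection step: this is exactly the place that uses $\dim R=2$ (it fails in three or more variables, where apolar ideals need not be complete intersections), and it is where the structural input of Hilbert--Burch, or equivalently the plateau of the Hilbert function, must genuinely be established rather than merely asserted.
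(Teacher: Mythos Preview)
The paper does not supply a proof of this theorem at all; it is simply quoted as \cite[Theorem 1.44(iv)]{IK} and used as a black box. So there is no in-paper argument to compare against.

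Your proposal is essentially the standard proof one finds behind that citation: realize $A=\mathbb{C}[x,y]/p^{\perp}$ as Artinian Gorenstein via the apolarity pairing, then use the codimension-$2$ structure theory (Hilbert--Burch plus self-duality of the resolution, equivalently Serre's theorem that Gorenstein of codimension $2$ is a complete intersection) to force two generators whose degrees sum to $d+2$, and run Macaulay's inverse systems backwards for the converse. The steps are correct, and you have correctly flagged the one place where the argument genuinely uses two variables. The only mild caution is that the Hilbert--Burch/self-duality step is doing real work and deserves either a careful statement or a precise reference; as written it is a sketch rather than a full proof, but the strategy is sound and matches the source the paper cites.
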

\begin{corollary}\label{unique_apolar}
Let $p(x,y)$ be a nonzero binary form in $K[x,y]_d$, not a $d$-th power, and suppose that $k < \frac{d+2}{2}$ is the smallest number such that  $(p^{ \perp})_k \neq \{0\}.$ Then there exists a projectively unique binary form $h(x,y) \in K[x,y]_k$ such that $(p^{ \perp})_k = \langle h \rangle.$ Thus, $p(x,y)$ has at most one minimal representation of length $k.$
\end{corollary}

\begin{proof}
We first prove uniqueness:
If $g(x,y)$ is a binary form which is apolar to $p$ and non-proportional  to $h,$ then $deg (g) > k$ by Theorem \ref{apolarity}. It follows that $(p^{\perp})_k$ has a  unique element (up to a scalar multiple). 

We now prove that $h \in K[x,y]_k$: If we take $r=k$, then  the linear system in (\ref{linear_system}) has at least one nonzero solution over $\mathbb{C},$ since $h(x,y)$ corresponds to a solution. Thus, it must have a solution over $K$ as well and by uniqueness  $h(x,y) \in K[x,y]_k.$ 
\end{proof}

The following theorem gives all the binary forms of degree $d\geq 3$ with Waring rank $d.$ It is well known that $L_\mathbb{C}(x^{d-1}y)= d;$ however, to the best of our knowledge, the converse has been proven only later   \cite[Corollary 3]{BBS} and \cite[Ex.11.35]{H}.
\begin{theorem}\label{full_rank}If $d \geq 3$, then $L_\mathbb{C}(f) = d$ if and only if there are two distinct linear forms $\ell_0$ and $\ell_1$ so that $f={\ell_0}^{d-1} \ell_1.$
\end{theorem}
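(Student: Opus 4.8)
The plan is to translate everything into apolarity and use that, over $\mathbb{C}$, $L_\mathbb{C}(f)$ equals the least degree of a square-free form in $f^\perp$ (Theorem 2.1 together with the remark following it). Throughout I will use that $L_\mathbb{C}$ is invariant under an invertible linear change of the variables $x,y$, since such a change permutes $d$-th powers of linear forms and hence preserves honest representations.

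For the ($\Leftarrow$) direction I would first normalize: since $\ell_0,\ell_1$ are distinct, a linear change of variables carries them to $x$ and $y$, so it suffices to show $L_\mathbb{C}(x^{d-1}y)=d$. A direct check gives $(x^{d-1}y)^{\perp}=\langle y^2,x^d\rangle$, because $y^2(D)$ and $x^d(D)$ both kill $x^{d-1}y$, the degrees sum to $d+2$, and $V_\mathbb{C}(y^2,x^d)=\emptyset$, so these two forms generate the whole ideal by Theorem 2.2. Every form of degree $k<d$ in $\langle y^2,x^d\rangle$ is a multiple of $y^2$, hence not square-free; on the other hand $x^d+y^d=x^d+y^2\cdot y^{d-2}$ lies in degree $d$ of the ideal and has $d$ distinct roots. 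Thus the least degree of a square-free apolar form is $d$, i.e. $L_\mathbb{C}(x^{d-1}y)=d$.

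For the ($\Rightarrow$) direction, write $f^{\perp}=\langle F,G\rangle$ with $a:=\deg F\le\deg G=:b$, $a+b=d+2$ and $V_\mathbb{C}(F,G)=\emptyset$ (Theorem 2.2). Since $d\ge 3$ we have $a\le\frac{d+2}{2}<d$. If $F$ were square-free then, being the unique apolar form of minimal degree (Corollary 2.3), it would give $L_\mathbb{C}(f)=a<d$, a contradiction; hence $F$ is not square-free, so $a\ge 2$ and $b\le d$. Because every form of degree $k<b$ in the ideal is a multiple of $F$, it inherits the repeated factor of $F$ and is not square-free, so no square-free apolar form has degree $<b$. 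The crux is to produce one in degree exactly $b$: choosing a square-free $q$ of degree $b-a$ coprime to $FG$ (a nonzero constant when $a=b$), the pencil $\{sFq+tG\}\subseteq(f^{\perp})_b$ is base-point-free, since a common zero would either lie in $V_\mathbb{C}(F,G)$ or force $q$ and $G$ to share a root. Hence the associated degree-$b$ morphism $\mathbb{P}^1\to\mathbb{P}^1$ has only finitely many branch points, so almost every member of the pencil is square-free. Therefore $L_\mathbb{C}(f)=b$, and the hypothesis $L_\mathbb{C}(f)=d$ forces $b=d$, i.e. $a=2$ and $F=\ell_0^2$ for some linear form $\ell_0$.

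Finally I would read off $f$ from $\ell_0^2\in f^{\perp}$. After a change of variables sending $\ell_0$ to $y$, the condition $y^2(D)f=0$ says $\partial_y^2 f=0$, so $f=x^{d-1}(\alpha x+\beta y)$. If $\beta=0$ then $f$ is a $d$-th power and $L_\mathbb{C}(f)=1\neq d$; thus $\beta\neq 0$, $\ell_1:=\alpha x+\beta y$ is not proportional to $x$, and $f=x^{d-1}\ell_1$ has the required shape. Reverting the coordinate change yields $f=(\ell_0')^{d-1}\ell_1'$ with $\ell_0',\ell_1'$ distinct. I expect the main obstacle to be the middle step: guaranteeing a square-free form in degree $b$, so that a non-square-free minimal generator forces $a=2$. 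The base-point-free pencil argument (equivalently, Bertini on $\mathbb{P}^1$ in characteristic $0$) is what makes this rigorous, and it is the step I would write out most carefully.
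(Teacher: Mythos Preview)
The paper does not prove this theorem; it is stated with attribution to \cite[Corollary~3]{BBS} and \cite[Ex.~11.35]{H}, the surrounding text noting only that $L_{\mathbb{C}}(x^{d-1}y)=d$ is classical while the converse is more recent. So there is no in-paper argument to compare against.

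Your proof is correct and fits entirely inside the apolarity framework of Section~2. The $(\Leftarrow)$ direction via $(x^{d-1}y)^\perp=\langle y^2,x^d\rangle$ is the standard computation. For $(\Rightarrow)$, the step you flag is indeed the only substantive one, and the base-point-free pencil argument handles it cleanly: since $Fq$ and $G$ share no root, the map $[Fq:G]\colon\mathbb{P}^1\to\mathbb{P}^1$ is an everywhere-defined degree-$b$ morphism, hence over $\mathbb{C}$ has only finitely many branch points, so a generic member $sFq+tG$ is square-free. This yields $L_{\mathbb{C}}(f)=b$, forcing $b=d$, $a=2$, $F=\ell_0^{\,2}$, and then $\partial_y^2 f=0$ gives $f=\ell_0^{\,d-1}\ell_1$ as you say. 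One cosmetic remark: invoking Corollary~2.3 when $F$ is square-free is unnecessary (and its hypothesis $k<\tfrac{d+2}{2}$ fails when $a=b$); Theorem~2.1 alone already gives $L_{\mathbb{C}}(f)\le a<d$, which is all you need for the contradiction.
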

\begin{theorem} \label{full_real_rank}\cite[Theorem 2.2]{BS} Let $f(x,y) \in \mathbb{R}[x,y]_d $ be a binary form of degree $d\geq 3$ and suppose that $f$ is not a $d$-th power. The real Waring rank of $f$ is d if and only if $f$ is hyperbolic.
\end{theorem}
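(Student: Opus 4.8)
The plan is to prove the two implications separately, the second being where essentially all the difficulty lies.

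The implication ``$f$ hyperbolic $\Rightarrow L_{\mathbb{R}}(f)=d$'' is the easy half. If $f$ is hyperbolic then $\tau=d$, and since $f$ is not a $d$-th power, Reznick's lower bound $L_{\mathbb{R}}(f)\geq\tau$ recalled in the Introduction gives $L_{\mathbb{R}}(f)\geq d$; as $L_{\mathbb{R}}(f)\leq\deg f=d$ always holds, equality follows.

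For the converse I would argue the contrapositive: assuming $f$ is not hyperbolic (and not a $d$-th power), I would exhibit a real representation of length $d-1$, forcing $L_{\mathbb{R}}(f)\leq d-1<d$. By Sylvester's Theorem (Theorem 2.1) with $r=d-1$, it suffices to find in $(f^{\perp})_{d-1}$ a Sylvester form that splits over $\mathbb{R}$, i.e. a real binary form of degree $d-1$ with $d-1$ distinct real roots. Thus the entire problem reduces to a reality statement about members of the real vector space $(f^{\perp})_{d-1}$. First I would describe this space. Since $f$ is not a $d$-th power, Theorem 2.2 makes $f^{\perp}=\langle q_1,q_2\rangle$ a complete intersection with $2\leq a:=\deg q_1\leq\deg q_2=:b$ and $a+b=d+2$, and the generators may be chosen real because $f$ is real. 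A Hilbert-function computation gives $\dim_{\mathbb{R}}(f^{\perp})_{d-1}=d-2$, so the space is nonzero for $d\geq 3$; its elements are exactly the forms $p_1q_1+p_2q_2$ of degree $d-1$ (with $p_2=0$ when $b>d-1$), equivalently the kernel of the $2\times d$ real Hankel matrix of (2.4) with $r=d-1$.

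The main obstacle is the reality of the roots: I must locate inside this $(d-2)$-dimensional family a form all of whose $d-1$ roots are real and simple, and this is exactly where non-hyperbolicity has to enter. I expect this to be the hard step, since it is precisely the content singled out by Blekherman and Sinn. The guiding principle is that the hyperbolic forms should be \emph{exactly} those for which no such totally real apolar form of degree $d-1$ exists. This can be checked directly in the lowest case $a=2$: there $q_1$ is a real quadratic, and a short argument shows that unless $q_1$ has two distinct real roots the form $f$ is hyperbolic (the cases $q_1=\ell^2$ and $q_1$ irreducible give $f=\ell_0^{d-1}\ell_1$ and $f=2\,\mathrm{Re}\left(\lambda(\gamma x+y)^d\right)$ respectively, both of which one verifies are hyperbolic); when $q_1$ does have distinct real roots, any cofactor $p_1$ of degree $d-3$ with distinct real roots disjoint from those of $q_1$ makes $h=p_1q_1$ the desired form. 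The crux is to upgrade this dichotomy to all $a$: I would study the real projective family $\mathbb{P}\bigl((f^{\perp})_{d-1}\bigr)$ and track the number of real roots of its members by a connectedness or intermediate-value argument, aiming to show that the locus of totally real forms is nonempty whenever $f$ possesses a conjugate pair of non-real roots. Ruling out, for non-hyperbolic $f$, the possibility that every degree-$(d-1)$ apolar form carries a non-real or repeated root is the heart of the theorem and the step I expect to demand the most care.
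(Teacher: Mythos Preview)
The paper does not supply a proof of this theorem at all: Theorem~2.5 is simply quoted, with attribution, from Blekherman and Sinn~\cite{BS}. So there is no ``paper's own proof'' to compare against; the result is used as a black box elsewhere in the paper (e.g.\ in Corollaries~3.3 and Theorem~3.5).

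As for your proposal on its own merits: the easy direction is fine and matches the argument already recorded in the Introduction (Reznick's bound $L_{\mathbb R}(f)\ge\tau$ together with $L_{\mathbb R}(f)\le d$). Your reduction of the hard direction to finding a square-free, totally real member of $(f^{\perp})_{d-1}$ is the correct reformulation via Theorem~2.1, and your treatment of the case $a=2$ is accurate. However, the general step---showing that for every non-hyperbolic $f$ the $(d-2)$-dimensional real system $(f^{\perp})_{d-1}$ contains a form with $d-1$ distinct real roots---is not actually carried out: you describe it as ``a connectedness or intermediate-value argument'' and flag it as the part ``to demand the most care,'' but give no mechanism linking the existence of a non-real root of $f$ to the existence of a hyperbolic apolar form. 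That is precisely the substance of the Blekherman--Sinn argument, and without it your write-up is a plan rather than a proof. If you intend to complete it, you will need a concrete tool (for instance, the interlacing/variation-diminishing techniques or the convexity arguments used in~\cite{BS}) rather than an appeal to connectedness alone.
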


The next tool is  an application of Descartes' Rule of Signs \cite[Question 49, pp.43] {PS}. 

\begin{theorem}\label{Descartes}
Let $a_0 \neq 0, a_n \neq 0,$ and assume that $2m$ consecutive coefficients of the polynomial $a_0 + a_1 x + \ldots + a_n x^n $ vanish, where $m$ is an integer, $m \geq 1.$ Then the polynomial has at least $2m$ non-real zeros.
\end{theorem}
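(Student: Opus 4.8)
The plan is to apply Descartes' Rule of Signs to both $P(x) = a_0 + a_1 x + \cdots + a_n x^n$ and to $P(-x)$, and then to read off the number of non-real zeros from the fundamental theorem of algebra. Write $V(g)$ for the number of sign changes in the coefficient sequence of a real polynomial $g$ (zero coefficients being skipped). Descartes' rule bounds the number of positive real zeros of $P$ (with multiplicity) by $V(P)$, and the number of negative real zeros of $P$ equals the number of positive real zeros of $P(-x)$, hence is at most $V(P(-x))$. Since $a_0 \neq 0$ there is no zero root, so the total number of real zeros of $P$, counted with multiplicity, is at most $V(P) + V(P(-x))$. As $P$ has exactly $n$ zeros in $\mathbb{C}$ with multiplicity, the number of non-real zeros is at least $n - \big(V(P) + V(P(-x))\big)$, and it suffices to prove
\[
V(P) + V(P(-x)) \le n - 2m.
\]

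The heart of the matter is this last inequality, which I would establish by a parity bookkeeping over the gaps between consecutive nonzero coefficients. Let the nonzero coefficients occur at the exponents $0 = e_0 < e_1 < \cdots < e_s = n$ (the endpoints because $a_0, a_n \neq 0$), and set $g_j = e_j - e_{j-1}$, so that $\sum_j g_j = n$. Both $V(P)$ and $V(P(-x))$ split as sums of indicator contributions $c_j$, one for each consecutive pair $(e_{j-1}, e_j)$. Passing from $P$ to $P(-x)$ multiplies the coefficient at exponent $e$ by $(-1)^e$; hence the relative sign of a pair is preserved when $g_j$ is even and reversed when $g_j$ is odd. Consequently each pair with $g_j$ odd contributes exactly $1$ to $V(P) + V(P(-x))$, while each pair with $g_j$ even contributes $0$ or $2$. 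In either case the contribution $c_j$ is at most $g_j$, so summing gives $V(P) + V(P(-x)) \le n$, with the deficit $n - \big(V(P) + V(P(-x))\big) = \sum_j (g_j - c_j)$ a sum of nonnegative terms.

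It remains to extract the surplus $2m$ from the prescribed gap. Because $a_0$ and $a_n$ are nonzero, the run of $2m$ vanishing coefficients lies strictly between two nonzero coefficients, so it is contained in a single gap $g_{j_0} \ge 2m+1$. If $g_{j_0}$ is odd then $c_{j_0} = 1$ and $g_{j_0} - c_{j_0} \ge 2m$; if $g_{j_0}$ is even then $g_{j_0} \ge 2m+2$, $c_{j_0} \le 2$, and again $g_{j_0} - c_{j_0} \ge 2m$. Since every other term $g_j - c_j$ is nonnegative, the total deficit is at least $2m$, which is precisely $V(P) + V(P(-x)) \le n - 2m$ and completes the argument. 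The one place demanding care is the pair-by-pair accounting of sign variations together with the verification that the parity of each gap governs whether a sign change is transferred between $P$ and $P(-x)$; once that mechanism is pinned down, the surplus $2m$ coming from the long gap falls out immediately.
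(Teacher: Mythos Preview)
Your proof is correct. The paper does not actually prove this statement: it is quoted as a tool, with a citation to P\'olya--Szeg\H{o} (Part V, Problem~49), and no argument is given in the paper itself. Your approach---bounding the number of real zeros by $V(P)+V(P(-x))$ via Descartes' Rule, then showing by a gap-by-gap parity analysis that each gap $g_j$ contributes at most $g_j$ to this sum while the long gap of length $\ge 2m+1$ contributes at most $g_{j_0}-2m$---is precisely the classical argument one finds in P\'olya--Szeg\H{o}. One small point you leave implicit is that the coefficients are real; this is needed for Descartes' Rule and is understood from the context in which the paper invokes the result.
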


We shall need the following result on the splitting fields \cite[Ex.3, p.30]{K}. Let $[L:K]$ denote the degree of the field extension $L/K.$
\begin{theorem}\label{splitting}
Let $f$ be a polynomial of degree $d$ with coefficients in $K$. Let $L$ be the splitting field of $f$ over $K.$ Then $[L:K]$  is a divisor of $d!.$
\end{theorem}
Binary forms with Waring rank 1 and Waring rank  2 were studied by  Reznick \cite{Re1}.
\begin{theorem} \cite[Theorem 4.1]{Re1} If $p(x,y) \in K[x,y]$, then $L_K (p) = 1 $ if and only if $L_\mathbb{C}(p) =1$.
\end{theorem}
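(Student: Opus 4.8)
The plan is to dispatch one implication for free and obtain the converse by a direct coefficient comparison. Since $K \subseteq \mathbb{C}$, any expression $p = \lambda(\alpha x + \beta y)^d$ with $\lambda,\alpha,\beta \in K$ is simultaneously a representation over $\mathbb{C}$; as $p \neq 0$ this forces $L_\mathbb{C}(p)=1$, which settles the direction $L_K(p)=1 \Rightarrow L_\mathbb{C}(p)=1$ immediately.

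For the converse I would begin from a complex rank-one representation $p(x,y) = \mu(\gamma x + \delta y)^d$ with $\mu \in \mathbb{C}$, $\mu \neq 0$, and $(\gamma,\delta)\neq(0,0)$, and write $p$ in coefficient form $p = \sum_{i=0}^d \binom{d}{i} a_i x^{d-i} y^i$ with all $a_i \in K$. Expanding the $d$-th power and matching coefficients (the binomial factors cancel) yields $a_i = \mu\, \gamma^{d-i}\delta^{i}$ for every $i$. The real content of the statement is that the unique, $d$-fold root of $p$ is $K$-rational; once this is established, both the root and the scalar descend to $K$.

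I would then split into cases according to which of $\gamma,\delta$ vanish. If $\delta=0$, then $a_i=0$ for $i>0$ and $p = a_0 x^d$ with $a_0 \in K$, which is visibly of $K$-rank one; the case $\gamma=0$ is symmetric. In the remaining case $\gamma,\delta \neq 0$, I would normalize to $p = a_0\,(x+cy)^d$ with $c = \delta/\gamma$ and $a_0 = \mu\gamma^d$. Here $a_0 \neq 0$ because $\mu,\gamma \neq 0$, and from $a_0 = \mu\gamma^d$ and $a_1 = \mu\gamma^{d-1}\delta$ one gets $c = \delta/\gamma = a_1/a_0 \in K$. Hence $p = a_0\,(x+cy)^d$ exhibits $a_0, c \in K$ and therefore $L_K(p)=1$.

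There is no serious obstacle here: the only care needed is the bookkeeping of the degenerate cases (so that the $d$-fold root may sit at infinity) and the verification that both the scalar and the root land in $K$. The key point—that a binary form over $K$ equal to a scalar multiple of a $d$-th power necessarily has a $K$-rational root—is exactly what the relation $c = a_1/a_0 \in K$ encodes.
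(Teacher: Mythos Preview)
Your argument is correct. The paper itself does not supply a proof of this statement; it is quoted verbatim from \cite[Theorem~4.1]{Re1} and used as a black box in the classification of small-rank forms. Your coefficient-matching computation is exactly the natural way to establish the result: the forward direction is automatic from $K\subseteq\mathbb{C}$, and for the converse the identity $c=\delta/\gamma=a_1/a_0$ (after handling the two degenerate cases where $\gamma$ or $\delta$ vanishes) shows that the unique root and the leading scalar both descend to $K$, so $p=a_0(x+cy)^d$ with $a_0,c\in K$. There is nothing to add---this is the expected elementary proof.
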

\begin{theorem}\cite[Theorem 4.6]{Re1} Let  $p(x,y)$ be a nonzero binary form of degree $d \geq 3$, and not a $d$-th power, with $\lambda_i,\alpha_i, \beta_i \in \mathbb{C}$ so that 

\begin{equation} \label{rank2}
 p(x,y) =\lambda_1 (\alpha_1 x + \beta _1 y)^d +\lambda_2 ( \alpha_2 x + \beta_2 y )^d \in K[x,y].
\end{equation}

\noindent If (\ref{rank2}) is honest and $L_K(p) >2 $, then there exists $u \in K$ with $\sqrt{u} \not\in K $ so that
 $L_{K(\sqrt{u})}(p)=2 $. The summands in (\ref{rank2}) are conjugates of each other in $K({\sqrt{u}}) $. 
\end{theorem}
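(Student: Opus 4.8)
The plan is to use the apolarity machinery, namely Sylvester's Theorem 2.1 together with Corollary \ref{unique apolar forms}, to pin down a degree-two apolar form $h$ that is automatically defined over $K$, and then to run a Galois-theoretic dichotomy according to whether $h$ splits over $K$.

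First I would observe that $L_\mathbb{C}(p)=2$: since $p$ is not a $d$-th power, $(p^{\perp})_1=\emptyset$ (a linear apolar form would force rank $1$ by Theorem 2.1), so $L_\mathbb{C}(p)\geq 2$, while the honest representation \eqref{rank2} gives $L_\mathbb{C}(p)\leq 2$. Because $d\geq 3$ we have $2<\frac{d+2}{2}$, so Corollary \ref{unique apolar forms} applies with $k=2$: the space $(p^{\perp})_2$ is one-dimensional, generated by a projectively unique quadratic $h\in K[x,y]_2$. The square-free Sylvester form associated to \eqref{rank2} lies in $(p^{\perp})_2$ by Theorem 2.1, hence is proportional to $h$; thus $h$ is square-free, and its two distinct roots encode the linear forms $\ell_1,\ell_2$ appearing in \eqref{rank2}.

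Next I would split into cases according to whether $h$ factors over $K$. If $h$ splits into distinct linear forms over $K$, then Theorem 2.1 furnishes coefficients $\lambda_k\in K$ with $p=\lambda_1\ell_1^d+\lambda_2\ell_2^d$, i.e.\ $L_K(p)=2$, contradicting the hypothesis $L_K(p)>2$. Hence $h$ is irreducible over $K$; writing its discriminant as $u$ (a non-square in $K$), the roots of $h$ lie in and generate the quadratic extension $K(\sqrt{u})$ with $\sqrt{u}\notin K$. Over $K(\sqrt{u})$ the form $h$ splits into two distinct conjugate linear factors, so Theorem 2.1 applied over $K(\sqrt{u})$ yields $p=\lambda_1\ell_1^d+\lambda_2\ell_2^d$ with $\lambda_k\in K(\sqrt{u})$; as $p$ is not a $d$-th power, this gives $L_{K(\sqrt{u})}(p)=2$.

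Finally, to see that the summands are conjugate, let $\sigma$ generate $\mathrm{Gal}(K(\sqrt{u})/K)$, acting on $K(\sqrt{u})[x,y]$ by fixing $x$ and $y$. Since $h$ is defined over $K$ but does not split over $K$, the map $\sigma$ interchanges its two roots; normalizing the representatives so that $\ell_2=\sigma(\ell_1)$, and applying $\sigma$ to $p=\lambda_1\ell_1^d+\lambda_2\ell_2^d$ while using $\sigma(p)=p$, I would match coefficients against the linearly independent powers $\ell_1^d,\ell_2^d$ to conclude $\lambda_2=\sigma(\lambda_1)$. Then $\lambda_2\ell_2^d=\sigma(\lambda_1\ell_1^d)$, so the summands in \eqref{rank2} are conjugate over $K$. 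The main obstacle is really the second paragraph: correctly invoking Corollary \ref{unique apolar forms} to guarantee that the apolar quadratic is both defined over $K$ and square-free, since everything downstream—the clean split-versus-irreducible dichotomy and the conjugation argument—rests on having a single $K$-rational quadratic $h$ to analyze.
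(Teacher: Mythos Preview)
The paper does not supply a proof of this statement; it is quoted from \cite{Re1} without argument. Your proof is correct, and it is exactly the rank-$2$ version of the argument the paper itself runs for Theorem~4.1: invoke Corollary~\ref{unique apolar forms} (legitimate because $2<\tfrac{d+2}{2}$ for $d\ge 3$) to obtain a projectively unique apolar quadratic $h\in K[x,y]_2$, observe that the honest representation \eqref{rank2} forces $h$ to be square-free, reduce to the splitting field of $h$ via its discriminant, and finish with the Galois action permuting the summands. So while there is no proof in the paper to compare against directly, your approach matches the template the paper adopts for the analogous rank-$3$ result.
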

\begin{example}
Suppose there exists $\gamma \in \mathbb{Q}~\text{with}~\sqrt{\gamma}  \not\in \mathbb{Q}$ so that
\begin{equation}
p_d(x,y)=\sum\limits_{0\leq 2i \leq d} {d\choose 2i} \gamma^{i}x^{d-2i}y^{2i},~ d \geq 3.
\end{equation}
Then $p_d(x,y)$ is a  rational binary form of Waring rank 2 with the following projectively unique representation: 

\begin{equation}\label{E:rank2}
p_d(x,y)=\frac{1}{2}  ( x + \sqrt{\gamma} y) ^d +\frac{1}{2} ( x- \sqrt{\gamma}y )^d. 
\end{equation}
Notice that the summands in (\ref{E:rank2}) are conjugates of each other in $\mathbb{Q}(\sqrt{\gamma}).$ It follows from Corollary \ref{unique_apolar} that $p$ has a unique representation of length 2, therefore  $L_K(p_d)=2$ if and only if $\sqrt{\gamma} \in K$. 
\end{example}


\begin{section} {A Lower Bound for the Rank of Binary Forms}\label{3}

In this section we give a lower bound for the Waring rank of binary forms based on their factorization over $\mathbb{C}.$ We also improve the lower bound for the real Waring rank of binary forms. 
\end{section}

\begin{theorem}\label{complex_bound} Let  $f(x,y)$ be a nonzero binary form of degree $d$ with the factorization

\begin{equation}
f(x,y)=\prod\limits_{i=0}^{r }\ell_i(x,y)^{m_i}
\end{equation}
where $ r\geq 1$ and ${\ell_i}$'s are distinct linear forms. Then $L_\mathbb{C}(f)\geq \max(m_0,\ldots,m_r)+1$. 
\end{theorem}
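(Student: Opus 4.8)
The plan is to prove the statement one root at a time: it suffices to show that if $\ell$ is a linear factor of $f$ of multiplicity $m$ and $f$ is not a $d$-th power, then $L_\mathbb{C}(f)\ge m+1$; applying this to the factor realizing $\max(m_0,\dots,m_r)$ gives the theorem. Since the Waring rank is invariant under the action of $GL_2(\mathbb{C})$ on the variables, I would first make a linear change of coordinates sending the root of $\ell$ to $[1:0]$, so that we may assume $y^m$ exactly divides $f$, say $f=y^m g$ with $y\nmid g$. Because $r\ge 1$, the form $f$ has at least two distinct linear factors and is not a $d$-th power, so $m\le d-1$ and $\deg g=d-m\ge 1$; this inequality, though it looks harmless, is exactly what drives the endgame. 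Finally, by Sylvester's Theorem 2.1 and the remark following it, $L_\mathbb{C}(f)=s$ produces a \emph{square-free} (Sylvester) form $h$ of degree $s$ with $h(D)f=0$, the splitting over $\mathbb{C}$ being automatic. I would assume $s\le m$ and aim for a contradiction.

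Writing $f=\sum_{i=0}^d\binom{d}{i}a_i\,x^{d-i}y^i$, divisibility by $y^m$ forces $a_0=\cdots=a_{m-1}=0$ and $a_m\neq 0$. I would then feed $h=\sum_{t=0}^s c_t x^{s-t}y^t$ into the apolarity system (2.4) with $r=s$, whose $j$-th equation is $\sum_{t=0}^{s}a_{j+t}c_t=0$ for $0\le j\le d-s$. The core of the argument is a staircase elimination starting at the row $j=m-s$ (which exists precisely because $s\le m$): in that row every index $j+t=m-s+t$ with $t<s$ lies below $m$, so the coefficients $a_{j+t}$ vanish and the equation collapses to $a_m c_s=0$, giving $c_s=0$. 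Advancing to $j=m-s+1,\,m-s+2,\dots$ and using the coefficients already shown to vanish, each successive row collapses to $a_m c_{s-p}=0$, so I peel off $c_s,c_{s-1},\dots$ one at a time down the anti-diagonal, as far as the supply of rows $j\le d-s$ allows.

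Counting how far the elimination reaches is what splits the problem into its easy and hard halves, and this is the step I expect to be the main obstacle. The rows last until $j=\min(m,\,d-s)$, so the elimination kills $c_s,\dots,c_{s-\min(s,\,d-m)}$; in all cases this forces $x^{\,d-m+1}\mid h$. If $s\le d-m$ (in particular whenever $m\le d/2$) this exhausts all the coefficients and yields $h=0$, an immediate contradiction, and here square-freeness is not even needed. The delicate case is $m>d/2$, equivalently $s>d-m$, where the rows run out before every coefficient is eliminated and one is left only with $x^{\,d-m+1}\mid h$. Since $\deg g=d-m\ge 1$ gives $d-m+1\ge 2$, this says $x^2\mid h$, which contradicts the square-freeness of the Sylvester form $h$. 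Thus $s\le m$ is impossible in every case, so $L_\mathbb{C}(f)\ge m+1$, and taking $m=\max(m_0,\dots,m_r)$ completes the proof. The essential point, and the reason the bound cannot be read off from a naive dimension count of $(f^{\perp})_s$ alone, is that in the range $m>d/2$ the contradiction comes not from the \emph{number} of low-degree apolar forms but from the \emph{structure} forced on them, which clashes with square-freeness.
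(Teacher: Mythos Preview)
Your proof is correct and follows essentially the same route as the paper: change variables so the high-multiplicity factor is $y$, read off $a_0=\cdots=a_{m-1}=0$, $a_m\neq 0$, and use the Sylvester/apolarity system to force $x^2\mid h$, contradicting square-freeness. The only difference is organizational: the paper works directly at degree $r=m$ (padding a shorter representation with dummy summands if necessary), so just the first two rows of the system already give $c_m=c_{m-1}=0$ and hence $x^2\mid h$; this avoids your case split on $s\le d-m$ versus $s>d-m$ and the full staircase elimination.
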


\begin{proof}

Assume without loss of generality that $m_0= max(m_0,\ldots,m_r).$ We use the fact that rank is invariant under invertible linear change of variables. After a linear change of variables we may assume $\ell_{0} =y$, then we have 
\begin{equation}
\tilde{f}(x,y)= y^{m_0}g(x,y)~\text{such that}~  y \nmid g(x,y).
\end{equation}
The first $m_0$ coefficients of $ \tilde{f} $ are zero, i.e. $a_0= \ldots = a_{m_{0}-1}=0$ and $a_{m_0} \neq 0.$ Note that $deg(\tilde{f}) \geq m_0+1$, so by setting $r=m_0$, (\ref{linear_system}) becomes:
\begin{equation*}
\begin{pmatrix}
0 & 0& \ldots&0&a_{m_0} \\
0 & 0 & \ldots&a_{m_0}&a_{m_{0}+1} \\
\vdots& \vdots & \vdots&\vdots&\vdots
\end{pmatrix}
\cdot
\begin{pmatrix}
c_0\\\vdots \\ c_{m_0}
\end{pmatrix}
=\begin{pmatrix}
0\\\vdots \\ 0
\end{pmatrix}.
\end{equation*}
Hence, $a_{m_0}c_{m_0} =a_{m_0}c_{{m_{0}}-1} + a_{{m_0}+1}c_{m_0}= 0.$ It follows that $c_{{m_{0}}-1}=c_{m_0}=0$ and every apolar form  of degree $m_0$ is divisible by $x^2$ and $L_\mathbb{C}(f) \geq m_{0}+1$ by Theorem \ref{Sylvester}. 
\end{proof}
 
Landsberg and Teitler \cite[Corollary 4.5]{LT} and Boij, Carlini and Geramita \cite{BCG} have both shown that $L_
\mathbb{C}(x^a y^b) = \max(a + 1, b + 1)$ if $a, b \geq 1.$
\begin{corollary}\label{complex}
Let $f(x,y)=\ell_0(x,y)^{d-2}\ell_1(x,y)\ell_2(x,y)$ such that $d \geq 3$ and $\ell_i$'s are distinct binary linear forms. Then $L_\mathbb{C}(f)=d-1.$
\end{corollary}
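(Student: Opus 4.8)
The plan is to pin down $L_\mathbb{C}(f)$ by proving the two inequalities $L_\mathbb{C}(f) \geq d-1$ and $L_\mathbb{C}(f) \leq d-1$ separately, the lower bound coming directly from Theorem 3.1 and the upper bound from the extremal characterization in Theorem 2.4.

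First I would read off the lower bound. Because $\ell_0$, $\ell_1$, $\ell_2$ are pairwise distinct, the expression $f = \ell_0^{\,d-2}\ell_1\ell_2$ is already the factorization of $f$ into distinct linear forms, with multiplicities $d-2,\,1,\,1$. Applying Theorem 3.1 (with $r=2$, which is legitimate since $r \geq 1$) gives
\[
L_\mathbb{C}(f) \;\geq\; \max(d-2,\,1,\,1) + 1 \;=\; (d-2)+1 \;=\; d-1,
\]
where I use $d \geq 3$ to ensure that $d-2 \geq 1$ is the largest multiplicity.

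For the upper bound I would combine the classical estimate $L_\mathbb{C}(f) \leq \deg(f) = d$, recalled in the introduction, with Theorem 2.4, which asserts that for $d \geq 3$ the value $d$ is attained only by forms of the shape $m^{d-1}n$ with $m,n$ distinct linear forms. Since factorization into linear forms is unique up to scalars, every linear factor of $f$ has multiplicity at most $d-2$, and $d-2 < d-1$ precisely because $d \geq 3$; hence $f$ is not of the form $m^{d-1}n$. By Theorem 2.4 this forces $L_\mathbb{C}(f) \neq d$, which together with $L_\mathbb{C}(f) \leq d$ yields $L_\mathbb{C}(f) \leq d-1$.

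Putting the two bounds together gives $L_\mathbb{C}(f) = d-1$. I do not expect a genuine obstacle here: the only step requiring attention is the elementary verification that the multiplicity pattern $(d-2,1,1)$ admits no linear factor of multiplicity $d-1$, so that $f$ falls outside the single exceptional family of Theorem 2.4. This is exactly where the hypothesis $d \geq 3$ is essential, and it is the point at which I would take the most care when writing the argument out in full.
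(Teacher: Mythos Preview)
Your argument is correct and matches the paper's proof exactly: the lower bound from Theorem~3.1 applied to the multiplicity pattern $(d-2,1,1)$, and the upper bound from Theorem~2.4 ruling out rank $d$. The only difference is that you spell out explicitly why $f$ fails to have a linear factor of multiplicity $d-1$, which the paper leaves implicit.
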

\begin{proof}
It follows from Theorem \ref{complex_bound} that $ d-1 \leq L_\mathbb{C}(f)$ and $L_\mathbb{C}(f) \leq d-1$ by Theorem~\ref{full_rank}. Thus, $L_\mathbb{C}(f)=d-1.$
\end{proof}
\begin{corollary}\label{real}
Suppose $f(x,y)=\ell(x,y)^{d-2} p(x,y)$ is a real binary form of degree $d \geq 3$ where $\ell(x,y)$ is a real  linear form and $p(x,y)$ is an irreducible real quadratic form. Then $L_\mathbb{R}(f)=d-1$.
\end{corollary}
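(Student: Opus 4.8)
The plan is to pin down $L_\mathbb{R}(f)$ by squeezing it between a lower bound of $d-1$ and an upper bound of $d-1$, each supplied by a result already available in the excerpt.

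For the lower bound I would first pass to the factorization of $f$ over $\mathbb{C}$. Since $p$ is an irreducible real quadratic, over $\mathbb{C}$ it splits as $p=\ell_1\ell_2$ into two distinct (complex-conjugate) linear forms, each coprime to $\ell$. Thus $f=\ell^{d-2}\ell_1\ell_2$ is precisely a form of the type treated in Corollary 3.2, so $L_\mathbb{C}(f)=d-1$; alternatively, Theorem 3.1 applied to this factorization gives $L_\mathbb{C}(f)\geq\max(d-2,1,1)+1=d-1$ directly. Since every real Waring decomposition is in particular a complex one, $L_\mathbb{R}(f)\geq L_\mathbb{C}(f)\geq d-1$.

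For the upper bound I would invoke Theorem 2.5. The form $f$ has degree $d\geq 3$ and is not a $d$-th power (it carries the irreducible factor $p$ coprime to $\ell$). Because $p$ is irreducible over $\mathbb{R}$, it contributes two non-real roots, so $f$ has exactly $d-2<d$ real roots and is therefore not hyperbolic. Theorem 2.5 then forces $L_\mathbb{R}(f)\neq d$. Combining this with the general bound $L_\mathbb{R}(f)\leq\deg(f)=d$ recalled in the introduction yields $L_\mathbb{R}(f)\leq d-1$, and together with the lower bound we conclude $L_\mathbb{R}(f)=d-1$.

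The step I expect to carry the real weight is the upper bound: one \emph{cannot} deduce $L_\mathbb{R}(f)=d-1$ from $L_\mathbb{C}(f)=d-1$ alone, since the real and complex ranks of a form may differ, and a priori $L_\mathbb{R}(f)$ could be as large as $d$. The essential input is that the Blekherman--Sinn characterization (Theorem 2.5) eliminates the value $d$ by detecting the non-real roots coming from $p$, leaving $d-1$ as the only possibility consistent with the lower bound. A more hands-on alternative, should one wish to avoid Theorem 2.5, would be to normalize $f$ to $y^{d-2}(x^2+y^2)$ by a real change of variables and exhibit a degree-$(d-1)$ Sylvester form in $(f^{\perp})_{d-1}$ that splits into $d-1$ distinct real linear factors; the obstacle there is precisely showing that the real pencil of Sylvester forms contains a totally real, squarefree member, which is exactly the difficulty that Theorem 2.5 resolves abstractly.
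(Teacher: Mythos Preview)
Your proof is correct and follows essentially the same route as the paper: the lower bound comes from Corollary~3.2 (via the complex factorization of $p$) giving $L_\mathbb{C}(f)=d-1\le L_\mathbb{R}(f)$, and the upper bound comes from Theorem~2.5 since $f$ is not hyperbolic. Your write-up simply makes explicit the verifications (that $p$ splits into two distinct conjugate factors coprime to $\ell$, that $f$ is not a $d$-th power, and that $f$ has non-real roots) which the paper leaves implicit.
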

\begin{proof}
The Waring rank of $f$ is $d-1$ by Corollary \ref{complex}; therefore, $d-1 \leq L_\mathbb{R}(f).$  On the other  hand, it follows from Theorem \ref{full_real_rank} that $L_\mathbb{R}(f) \leq d-1.$ 
\end{proof}
We can determine the Waring rank of binary cubics based on their factorization \cite[Theorem 5.2]{Re1}.
\begin{remark} 
For $d=4$, we can determine the Waring rank of binary forms with repeating roots based on their factorization. Assume that $\ell_i$'s are distinct  binary linear forms. The following table follows from Theorem \ref{full_rank}, Theorem \ref{complex_bound} and Corollary \ref{complex}.
\begin{center}

\begin{tabular}{ l | c}
$p(x,y)$ & $L_\mathbb{C}(p(x,y))$   \\ \hline
$\ell_0(x,y)^4$ & 1 \\ 
$\ell_0(x,y)^3\ell_1(x,y)$ & 4  \\
$\ell_0(x,y)^2\ell_1(x,y)^2 $ & 3 \\
$\ell_0(x,y)^2\ell_1(x,y) \ell_2(x,y)$ & 3 \\
$\ell_0(x,y) \ell_1(x,y) \ell_2(x,y) \ell_3(x,y)$ & 2,3 \\
\end{tabular}
\end{center}
Notice that a square-free binary quartic form can have Waring rank 2 or 3; for example, $L_\mathbb{C}(4x^3y+6x^2y^2+4xy^3)=L_\mathbb{C}(x^4+4 x^2y^2+y^4)=3$ and $L_\mathbb{C}(8x^3y+36x^2y^2+36xy^3)=L_\mathbb{C}(x^4+y^4)=2.$
\end{remark}
It can be checked from the above table and Theorem \ref{full_real_rank} that  $L_\mathbb{R}(p(x,y)^2)=3$ where $p(x,y)$ is an irreducible real quadratic. This result is also a consequence of the known fact: $L_\mathbb{C}(p(x,y) ^{k})=L_\mathbb{R}(p(x,y) ^{k})=k+1$  \cite[Corollary 5.6]{Re1}. 
\begin{corollary}\label{real_bound}
 Let $f(x,y)$ be a nonzero real binary form of degree $d$ and not a $d$-th  power with the factorization 
\begin{equation}
f(x,y)=\prod\limits_{i=0}^{r}\ell_i(x,y)^{m_i} \prod\limits_{k=0}^{s}p_k(x,y)^{n_k}
\end{equation}
 where $\ell_i$'s are distinct real binary linear forms and $p_k$'s are distinct irreducible real quadratic forms. Then  $L_\mathbb{R}(f) \geq \max \bigl(\sum\limits_{i=0}^{r} m_i, max(m_0,\ldots,m_r,n_0,\ldots,n_s)+1\bigr).$
\end{corollary}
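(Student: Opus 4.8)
The plan is to prove the two quantities inside the maximum as separate lower bounds for $L_\mathbb{R}(f)$ and then combine them, since $\max(A,B)$ is a lower bound as soon as both $A$ and $B$ are. Concretely, I would show $L_\mathbb{R}(f)\geq \sum_{i=0}^r m_i$ and $L_\mathbb{R}(f)\geq \max(m_0,\ldots,m_r,n_0,\ldots,n_s)+1$ independently.

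First I would handle the bound $L_\mathbb{R}(f)\geq \sum_{i=0}^r m_i$. The key observation is that each $p_k$ is an irreducible real quadratic and hence has no real roots, so the real roots of $f$ are exactly the roots of the $\ell_i$, counted with multiplicity. Thus the number $\tau$ of real roots of $f$ counted with multiplicity is precisely $\tau=\sum_{i=0}^r m_i$, and since $f$ is not a $d$-th power, \cite[Theorem 3.2]{Re1} applies directly and gives $L_\mathbb{R}(f)\geq \tau$.

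For the second bound I would pass to the factorization of $f$ over $\mathbb{C}$. Each real irreducible quadratic $p_k$ splits over $\mathbb{C}$ into a pair of distinct complex conjugate linear forms $p_k=q_k\bar q_k$, so that $f=\prod_{i=0}^r \ell_i^{m_i}\prod_{k=0}^s q_k^{n_k}\bar q_k^{n_k}$ is the factorization of $f$ into distinct linear forms over $\mathbb{C}$; distinctness of the $p_k$ and of the $\ell_i$ guarantees that the $\ell_i,q_k,\bar q_k$ are pairwise non-proportional. The multiplicities occurring in this complex factorization are the $m_i$ together with the $n_k$ (each $n_k$ appearing twice), so the maximum multiplicity equals $\max(m_0,\ldots,m_r,n_0,\ldots,n_s)$. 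Applying Theorem 3.1 to the complex factorization yields $L_\mathbb{C}(f)\geq \max(m_0,\ldots,m_r,n_0,\ldots,n_s)+1$. Since every representation of $f$ as a real sum of $d$-th powers is also a complex one, we have $L_\mathbb{R}(f)\geq L_\mathbb{C}(f)$, which gives the claimed bound.

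The argument is essentially a bookkeeping combination of \cite[Theorem 3.2]{Re1} and Theorem 3.1, so I do not expect a serious analytic obstacle. The one point requiring care, and the step I would double-check, is the transition to $\mathbb{C}$: I must confirm that $f$ not being a $d$-th power forces its complex factorization to contain at least two distinct linear factors, so that Theorem 3.1 (whose hypothesis is $r\geq 1$) genuinely applies, and that splitting the quadratics into conjugate pairs does not merge multiplicities in a way that alters the maximum. Both are immediate under the stated hypotheses, but they are precisely the places where the assumptions are used.
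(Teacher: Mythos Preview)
Your proposal is correct and matches the paper's own proof, which simply says the result follows from \cite[Theorems 3.1, 3.2]{Re1} together with Theorem~3.1 of the paper. You have merely made explicit the two bookkeeping steps the paper leaves implicit: that $\tau=\sum_i m_i$ because the $p_k$ contribute no real roots, and that $L_\mathbb{R}(f)\geq L_\mathbb{C}(f)$ lets Theorem~3.1 pass from the complex factorization to the real rank.
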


\begin{proof}
The result follows from \cite[Theorem 3.2]{Re1} and  Theorem \ref{complex_bound}.
\end{proof}
Let $f_{\lambda}(x,y)= x^{2k} +\binom{2k}{k} \lambda x^k y^k+y^{2k},~ \lambda\neq 0 \in \mathbb{R}.$ If $|\lambda| \binom{2k}{k}<2,$ then $f_{\lambda}$ is a square-free definite form; therefore, Corollary \ref{real_bound}  does not suggest a lower bound for the real Waring rank of $f_\lambda.$ In the following  theorem, arguments  employing Descartes' Rule of Signs provide a lower bound for $L_\mathbb{R}(f_{\lambda}).$ 

\begin{theorem}\label{definite_rank}
Let $f_{\lambda}(x,y)=x^{2k}+\binom{2k}{k}\lambda x^ky^k+y^{2k}~\text{where}~\lambda\neq 0 \in \mathbb{R}~\text{and}~k \geq 3.$ Then $L_\mathbb{R}(f_{\lambda})\in \{2k-2,2k-1\}.$
\end{theorem}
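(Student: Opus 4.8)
The plan is to obtain the lower bound $L_{\mathbb{R}}(f_\lambda)\ge 2k-2$ from the very sparse shape of the apolar forms of $f_\lambda$ combined with Theorem~2.6, and the upper bound $L_{\mathbb{R}}(f_\lambda)\le 2k-1$ from Theorem~2.5. In the normalization of Theorem~2.1 the coefficients of $f_\lambda$ are $a_0=a_{2k}=1$, $a_k=\lambda$, and $a_i=0$ otherwise. Writing a degree-$r$ candidate apolar form as $h=\sum_{t=0}^{r}c_t\,x^{r-t}y^{t}$ and substituting into the linear system (2.4), the equation indexed by $j$ contains only the entries coming from $a_0,a_k,a_{2k}$; for $1\le j\le 2k-r-1$ the $c_0$-entry is absent (it occurs only when $j=0$) and the $c_{2k-j}$-entry is absent (it occurs only when $j\ge 2k-r$), so the equation collapses to $\lambda c_{k-j}=0$. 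Hence $c_t=0$ for every $t$ with $r-k+1\le t\le k-1$. A similar analysis of the first and middle equations shows that for $r<k$ every $c_t$ vanishes, so $(f_\lambda^{\perp})_r=0$ and no representation of length $r<k$ exists. For $k\le r\le 2k-2$ the vanishing just described means each apolar form has the gapped shape $h=x^k u-y^k v$ with $\deg u=\deg v=r-k$, so that its coefficients vanish on the block of $y$-degrees $r-k+1,\dots,k-1$, a run of $2k-r-1$ consecutive zeros.

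For $r\le 2k-3$ this run has length at least two. If such an $h$ were to give a minimal real representation of length $r$ then, by Theorem~2.1, $h$ must be square-free and split into distinct real linear factors; in particular $u,v\ne 0$, since otherwise $h$ is divisible by $x^k$ or $y^k$ and cannot be square-free for $k\ge 2$. Thus $h$ has a nonzero coefficient in some $y$-degree $\le r-k$ and another in some $y$-degree $\ge k$, and after removing any pure factors $x$ or $y$ (which contribute only real roots and leave the interior gap intact) we are left with a polynomial having nonzero extreme coefficients and at least two consecutive interior zeros. Theorem~2.6 then produces at least two non-real roots, contradicting real-rootedness. Therefore no real representation of length $\le 2k-3$ exists and $L_{\mathbb{R}}(f_\lambda)\ge 2k-2$; for $k=2$ the interval $k\le r\le 2k-3$ is empty and the bound $L_{\mathbb{R}}(f_\lambda)\ge 2=2k-2$ is automatic.

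For the upper bound I would apply Theorem~2.5. The form $f_\lambda$ is not a $2k$-th power, because $(\alpha x+\beta y)^{2k}$ has vanishing $x^{2k-1}y$-coefficient only when $\alpha\beta=0$, which forces a pure power $x^{2k}$ or $y^{2k}$, whereas $f_\lambda$ carries both extreme terms and the middle term $\binom{2k}{k}\lambda x^ky^k$. Dehomogenizing and setting $u=x^k$ shows that the real roots of $f_\lambda$ are the real $k$-th roots of the at most two real solutions of $u^2+\binom{2k}{k}\lambda u+1=0$; for $k\ge 3$ each such $u$ yields at most two real values of $x$, so $f_\lambda$ has at most four real roots and is not hyperbolic, and Theorem~2.5 gives $L_{\mathbb{R}}(f_\lambda)\le 2k-1$.

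I expect the genuine obstacle to be exactly this non-hyperbolicity step. When $k=2$ and $\lambda$ is sufficiently negative the quadratic $u^2+\binom{4}{2}\lambda u+1$ has two positive roots, so the quartic $f_\lambda$ acquires four real roots and becomes hyperbolic; by Theorem~2.5 its real rank is then $2k$, which lies outside $\{2k-2,2k-1\}$. Consequently the stated conclusion holds precisely in the non-hyperbolic (in particular the square-free definite) regime singled out in the paragraph preceding the theorem, and the proof should either invoke that hypothesis or treat $k\ge 3$ uniformly (where non-hyperbolicity is automatic) and record the $k=2$ exception separately.
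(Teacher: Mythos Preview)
Your approach mirrors the paper's exactly: derive the vanishing pattern of coefficients in any apolar form of degree $r=k+j$ from the Sylvester system (2.4), invoke Theorem~2.6 (Descartes) to rule out a real-rooted apolar form whenever there are at least two consecutive interior zero coefficients, and appeal to Theorem~2.5 for the upper bound. Your treatment of the extreme-coefficient hypothesis in Theorem~2.6 (stripping off any factors of $x$ or $y$ before applying it) and your check that $f_\lambda$ is not a $2k$-th power are more explicit than the paper, which simply asserts the conclusions.

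Your closing observation about $k=2$ is correct and in fact exposes a gap that the paper's own proof shares. The paper invokes Theorem~2.5 to obtain $L_{\mathbb{R}}(f_\lambda)\le 2k-1$ without verifying that $f_\lambda$ is non-hyperbolic. For $k=2$ and $\lambda\le -\tfrac13$ the quartic $x^4+6\lambda x^2y^2+y^4$ \emph{is} hyperbolic (for instance $x^4-6x^2y^2+y^4=(x-(1+\sqrt2)y)(x+(1+\sqrt2)y)(x-(\sqrt2-1)y)(x+(\sqrt2-1)y)$), whence $L_{\mathbb{R}}(f_\lambda)=4\notin\{2,3\}$. Thus the theorem as stated fails at $k=2$ without an additional hypothesis; your proposed repair---restrict to $k\ge 3$, where your root-counting shows non-hyperbolicity is automatic, or impose the definiteness condition from the preceding paragraph when $k=2$---is the right one.
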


\begin{proof} First notice that if $k\geq 3,$ then $f_{\lambda}(x,y)$ is not hyperbolic by Theorem \ref{Descartes}. Thus, it follows from Theorem \ref{full_real_rank} that $L_\mathbb{R}(f_{\lambda}) \leq 2k-1.$ 
We then let $r=k +j,$ $0\leq j \leq k-1$ and look for a Sylvester form of degree $r.$ If $k=4,~j=1$, then (\ref{linear_system}) becomes: 
\begin{equation*} \left (\begin{array}{cccccc}
1 & 0 & 0 &0 & \lambda &  0 \\
0 & 0 & 0 & \lambda   & 0&0\\
0&0 &\lambda  & 0 & 0&0\\
0 & \lambda & 0 & 0 &0 &1 \end{array} \right) 
\left( \begin{array}{c}
c_0 \\
c_1 \\
\vdots\\
c_{5}
 \end{array} \right)=\left( \begin{array}{cccc}
0 \\
0\\
0\\
0 \end{array} \right) \Longrightarrow (c_0,c_1,c_2,c_3,c_4,c_5)=(-\lambda c_4, c_1, 0,0,c_4, -\lambda c_1).\end{equation*}
Instead if $k=5,~  j=2$, then $(c_0,c_1,c_2,c_3,c_4,c_5,c_6,c_7)=(-\lambda c_5, c_1, c_2,0,0,c_5, c_6,-\lambda c_2)$ is the solution of the linear system:
\begin{equation*} \left (\begin{array}{cccccccc}
1 & 0 & 0 &0 & 0 & \lambda &  0&0 \\
0 & 0 & 0 & 0 & \lambda & 0 &0&0\\
0&0 & 0& \lambda  & 0 & 0&0&0\\
0 & 0& \lambda & 0 & 0 &0 &0&1 \end{array} \right) 
\left( \begin{array}{c}
c_0 \\
c_1 \\
\vdots\\
c_{7}
 \end{array} \right)=\left( \begin{array}{c}
0 \\
0 \\
0 \\
0 \end{array} \right).
\end{equation*}
In general for $r= k+j,$ we can see that  if $( c_0,c_1,\ldots, c_{k+j} )$ is a solution for  (\ref{linear_system}), then
\begin{eqnarray*}
\begin{aligned}
&c_{i} =0,\hspace{0.1in} j+1 \leq i \leq k-1,\\
&c_0=-\lambda c_k, ~c_{k+j}=-\lambda c_j.
\end{aligned}
\end{eqnarray*}
Therefore, $h_{k+j}(x,y),$ the corresponding Sylvester form of degree $k+j,$ has at least $k-j-1$ consecutive  missing coefficients. If  $h_{k+j}$ splits over $\mathbb{R}$, then $k-j \leq 2$ by Theorem \ref{Descartes}; thus, $2k-2 \leq L_\mathbb{R}(f_{\lambda}).$ 
\end{proof}

The following theorem gives a parametrization for a $\mathbb{C}$-minimal representation of $f_\lambda(x,y)$ as $\lambda$ varies over all nonzero complex numbers. Let $\ze_d$ denote a primitive $d$-th root of unity. 

\begin{theorem}\label{definite_rep}Suppose $f_{\lambda}(x,y)= x^{2k} +\lambda \binom{2k}{k}  x^k y^k+ y^{2k},~\lambda \neq 0.$ Then $L_\mathbb{C}(f_{\lambda})=k$ if $\lambda =\pm 1$ and $k+1$ otherwise. The following is a  minimal representation of $f_{\lambda},$ which is unique for $\lambda =\pm1,$ 
\begin{equation}\label{E:definite}
x^{2k} + \tbinom{2k}{k} \lambda x^k y^k + y^{2k}=(1- \lambda^2 ) y^{2k} + \frac{1}{k}\sum\limits_{i=0}^{k-1} ( x + \lambda^{\frac{1}{k}} \zeta_k^{i}y ) ^{2k}. 
\end{equation}
\end{theorem}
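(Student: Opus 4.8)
The plan is to establish the upper bound by verifying the displayed identity directly, and the lower bound by reading off the low-degree part of the apolar ideal from Sylvester's matrix (2.4), exactly in the spirit of the proof of Theorem 3.5.

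First I would verify \eqref{} the identity. Expanding each summand by the binomial theorem and summing over $i$, the coefficient of $x^{2k-t}y^t$ carries the factor $\sum_{i=0}^{k-1}\zeta_k^{it}$, which equals $k$ when $k\mid t$ and $0$ otherwise. Since $0\le t\le 2k$, only $t\in\{0,k,2k\}$ survive, giving $\frac1k\sum_{i=0}^{k-1}(x+\lambda^{1/k}\zeta_k^{i}y)^{2k}=x^{2k}+\binom{2k}{k}\lambda x^ky^k+\lambda^2 y^{2k}$; adding $(1-\lambda^2)y^{2k}$ recovers $f_\lambda$. This shows $L_\mathbb{C}(f_\lambda)\le k+1$ for every $\lambda$, and $L_\mathbb{C}(f_\lambda)\le k$ when $\lambda=\pm1$, since then the $y^{2k}$ summand drops out.

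Next I would handle the lower bound. First note that $f_\lambda$ is not a $2k$-th power, as it lacks the $x^{2k-1}y$ term, so Corollary 2.3 applies in every degree $r\le k$. Reading off the (normalized) coefficient sequence $(a_0,\ldots,a_{2k})=(1,0,\ldots,0,\lambda,0,\ldots,0,1)$ with $\lambda$ in position $k$ and feeding it into (2.4): for $r<k$ the rows $s=k-r,\ldots,k$ of the Sylvester matrix each contain a single nonzero entry $\lambda$, and as $s$ sweeps this range the column $k-s$ of that entry runs through all of $0,\ldots,r$, forcing every $c_t=0$; hence $(f_\lambda^{\perp})_r=0$ and $L_\mathbb{C}(f_\lambda)\ge k$. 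For $r=k$ the same analysis (the $j=0$ case of the proof of Theorem 3.5) yields $c_1=\cdots=c_{k-1}=0$ together with $c_0=-\lambda c_k$ and $c_k=-\lambda c_0$, so that $(1-\lambda^2)c_0=0$.

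The dichotomy is then immediate. When $\lambda\neq\pm1$ this relation forces $c_0=c_k=0$, so $(f_\lambda^{\perp})_k=0$ and $L_\mathbb{C}(f_\lambda)\ge k+1$; combined with the upper bound this gives $L_\mathbb{C}(f_\lambda)=k+1$. When $\lambda=\pm1$ the apolar form is $x^k-\lambda y^k=x^k\mp y^k$, which is square-free and splits over $\mathbb{C}$, whence $L_\mathbb{C}(f_\lambda)=k$; projective uniqueness from Corollary 2.3 (valid since $k<\tfrac{2k+2}{2}$) shows the length-$k$ representation is unique, and one checks directly that (2.15) realizes exactly this Sylvester form. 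I expect the main obstacle to be the lower bound, specifically isolating the scalar relation $(1-\lambda^2)c_0=0$ that cleanly separates $\lambda=\pm1$ from all other nonzero $\lambda$; the remaining work is bookkeeping on the sparse Hankel matrix.
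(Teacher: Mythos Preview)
Your proposal is correct and follows essentially the same approach as the paper: verify the identity via the root-of-unity sum, use the Sylvester matrix (2.4) at $r=k-1$ to get $L_\mathbb{C}(f_\lambda)\ge k$, and then analyze the $r=k$ system to separate $\lambda=\pm1$ from the rest. The only cosmetic difference is that where the paper simply asserts that for $\lambda\neq\pm1$ the $(k+1)\times(k+1)$ matrix in (2.4) is nonsingular, you carry out the explicit row analysis to extract the relation $(1-\lambda^2)c_0=0$ and identify the degree-$k$ apolar form $x^k-\lambda y^k$ when $\lambda=\pm1$; this is the same content, just made visible.
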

\begin{proof}

We first evaluate the right-hand side of (\ref{E:definite}):
\begin{equation}\label{E:definite_rep}
(1- \lambda^2 ) y^{2k}+\frac{1}{k}\sum\limits_{i=0}^{k-1} ( x +\lambda^{\frac{1}{k}} \zeta_{k}^{i} y )^{2k}= (1- \lambda^2 ) y^{2k}+ \frac{1}{k}\sum\limits_{j=0}^{2k} \tbinom{2k}{j}x^{2k-j}y^{j}  \lambda^{\frac{j}{k}}\big(\sum\limits_{i=0}^{k-1}\zeta_{k}^{ij}\big). 
\end{equation}
The sum $\sum\limits_{i=0}^{k-1}\zeta_{k}^{ij}=0$ unless $k~ | ~j$,  in which case it equals to $k.$ The only multiples of  $k$ in the set $\{ j : 0\leq j \leq 2k\}$  are $0,k,2k.$ The right-hand side of (\ref{E:definite_rep}) reduces to left-hand side of (\ref{E:definite}). If we let $r=k-1$, then the linear system in  (\ref{linear_system}) has only the trivial solution, so $k \leq L_\mathbb{C}(f_{\lambda})\leq k+1.$

If $\lambda \in \{1,-1\}$ then the first summand in (\ref{E:definite_rep}) is zero, therefore $f_{\lambda}$ has a unique minimal representation which is given by (\ref{E:definite}) and $L_\mathbb{C}(f_{\lambda})=k.$ 

Let $\lambda \neq  \pm 1$ and $r=k,$ then the matrix in (\ref{linear_system}) is nonsingular, so $L_\mathbb{C}(f_{\lambda})=k+1.$ Then the minimal representation given by (\ref{E:definite}) is not necessarily unique.
\end{proof}


\begin{section}{Additional Results}\label{4}
 \begin{theorem}\label{S_rank}
Suppose $h(x,y)$ is a Sylvester form of degree $r$ for $f(x,y).$ If $S$ is a splitting field of $h,$ then $L_S(f) \leq r.$ If furthermore there is no Sylvester form of degree $r-1,$ then $L_S(f)=r.$
\end{theorem}
\begin{proof}
The length of the shortest representation of $f$ over $S$ is $L_S(f).$ If $h$ splits over $S,$ then it follows from Theorem \ref{Sylvester} that there exist $\lambda_k,\alpha_k, \beta_k \in S$ such that 
\begin{equation}\label{E:S_rank}
f(x,y) = \sum\limits_{k=1}^{r}  \lambda_k ( \alpha_k x + \beta_k y ) ^d.
\end{equation}
Therefore, $L_S(f) \leq r.$  If  there is no Sylvester form of degree $r-1,$ then the representation in (\ref{E:S_rank}) is a $\mathbb{C}$-minimal representation and $L_S(f)=L_\mathbb{C}(f)= r.$
\end{proof}
\begin{theorem}\label{classification}
Suppose  $r < \frac{d+2}{2}$ and  $f(x,y) \in K[x,y]_d$ with $L_\mathbb{C}(f)=r.$  Then there exists a field extension  $S/K$ such that $L_{S}(f)=r$ and  $[S:K]$ divides $r!$. 
\end{theorem}

\begin{proof}
Let $h(x,y)$ be a Sylvester form of degree $r$ for $f.$ Then $h(x,y) \in K[x,y]$ by Corollary~\ref{unique_apolar}. There is no Sylvester form of degree $r-1,$ since $L_\mathbb{C}(f)=r.$ If $S$ is a splitting field of $h,$ then $L_{S}(f)=r$ by Theorem \ref{S_rank}. Moreover, it follows from Theorem \ref{splitting} that $[S:K] \mid r!.$
\end{proof}

The following lemma gives a special case that $S=K$ when $r=3.$
\begin{lemma}
Suppose $ d \geq 5$  and   there exist nonzero $\lambda_i, \alpha_1, \beta_1  \in \mathbb{C}$ so that 
\begin{equation}\label{E:S=K}
f(x,y)= \lambda_1(\alpha_1 x + \beta _ 1 y ) ^d +\lambda_2  x^d + \lambda_3 y ^d  \in K [x,y].
\end{equation}
Then $L_K(f)=3$, and (\ref{E:S=K}) is the projectively unique representation of $f$ of length 3. 
\end{lemma} 
 \begin{proof} 
The Sylvester form corresponding to (\ref{E:S=K}) is $h(x,y) = ( \beta_1x- \alpha_1 y ) y x$ by Theorem \ref{Sylvester}. It follows from Corollary \ref{unique_apolar} that $h \in K[x,y];$ thus, $h$ splits over $K$ and $L_K(f)=3.$
\end{proof}

 The following corollary concerns a special case of  Theorem \ref{classification} where  $r=3$ and $K$ is a real closed field. 
\begin{corollary}\label{real_closed}

Suppose $d \geq 5,~ K \subseteq \mathbb{C}$ is real closed field and there exist $\lambda_i, \alpha_i, \beta_i \in \mathbb{C}$ such that 
\begin{equation}\label{E:real_closed}
f(x,y)= \lambda_1(\alpha_1 x + \beta _ 1 y ) ^d + \lambda_2( \alpha_2 x + \beta _ 2 x ) ^d + \lambda_3(\alpha_3 x + \beta_3 y) ^d  \in K [x,y]
\end{equation}
is a honest representation and  $L_K(f) > 3.~\text{If}~u=(\beta_1\alpha_2-\beta_2\alpha_1)^2(\beta_1\alpha_3-\beta_3\alpha_1)^2(\beta_2\alpha_3-~\beta_3\alpha_2)^2,$  then $L _{K (\sqrt{u})}(f) = 3.$ One of the summands in (\ref{E:real_closed}) is in $K[x,y]$  whereas the other two summands are conjugates of each other in $K(\sqrt{u}).$

\end{corollary}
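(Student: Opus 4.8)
The plan is to derive Corollary 4.2 as the real-closed-field specialization of Theorem 4.1. The key structural observation is that when $K$ is a real closed field, every nontrivial finite extension is given by adjoining $\sqrt{-1}$, so $K(\sqrt{u}) = K(\sqrt{-1}) = \overline{K}$ is algebraically closed for any $u \in K$ that is not already a square in $K$; concretely, the squares in $K$ are exactly the nonnegative elements, and every non-square is a negative times a square, so $\sqrt{u}$ lies in $K(\sqrt{-1})$. This single fact collapses the three cases of Theorem 4.1 into one.

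First I would invoke Theorem 4.1 directly, since the hypotheses of the corollary (honest representation of length $3$, $d \geq 5$, $L_K(p) > 3$) are exactly those of the theorem, just with $K$ additionally assumed real closed. This produces the cubic $h(x,y) = \prod_{i=1}^{3}(y - \gamma_i x)$ with $\gamma_i = \beta_i/\alpha_i$, projectively unique in $(p^{\perp})_3$ and rational over $K$, which does not split over $K$ by hypothesis. The task is then to show that exactly Case 3 occurs: precisely one $\gamma_i \in K$ and $\sqrt{u} \notin K$, where $u$ is the discriminant of $\bar h = h(x,1)$.

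Next I would argue that Cases 1 and 2 cannot arise over a real closed field. A real cubic polynomial always has at least one real root, and since $K$ is real closed, $\bar h$ splits into linear factors over $\overline{K} = K(\sqrt{-1})$ with at least one root lying in $K$ itself. Hence at least one $\gamma_i \in K$, which rules out the scenario $\gamma_i \notin K$ for all $i$; this eliminates Cases 1 and 2 of Theorem 4.1 and forces Case 3. Because $h$ does not split over $K$, the remaining two roots are not in $K$, so they are a genuine complex-conjugate pair, giving exactly one $\gamma_i \in K$. The discriminant $u$ of $\bar h$ then satisfies $\sqrt{u} \notin K$ (equivalently $u < 0$ in the real closed order), precisely because two roots are non-real, so $K(\sqrt{u}) = K(\sqrt{-1})$ is the splitting field; applying the Case 3 conclusion of Theorem 4.1 yields $L_{K(\sqrt{u})}(p) = 3$ together with the statement that one summand of (4.3) lies in $K[x,y]$ and the other two are conjugate in $K(\sqrt{u})$.

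The main obstacle, though a mild one, is making the sign/square-class bookkeeping airtight in the abstract real closed setting rather than over $\mathbb{R}$ itself: I must use only the ordered-field axioms of real closed fields (squares are exactly the nonnegatives; every positive has a square root in $K$; $\overline{K} = K(\sqrt{-1})$) rather than appealing to analytic facts about $\mathbb{R}$. In particular, the claim that a real cubic has a root in $K$ should be justified via the intermediate-value property available in real closed fields, and the identification $K(\sqrt{u}) = K(\sqrt{-1})$ should be tied cleanly to the sign of the discriminant. Once those algebraic substitutes for the familiar real-analysis statements are in place, the corollary follows immediately from Theorem 4.1.
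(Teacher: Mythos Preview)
Your proposal is correct and follows essentially the same route as the paper: both reduce to Case~3 of Theorem~4.1 by observing that an odd-degree polynomial over a real closed field has a root in $K$, forcing exactly one $\gamma_i \in K$. Your additional identification $K(\sqrt{u}) = K(\sqrt{-1}) = \overline{K}$ is a pleasant extra observation but is not needed for the argument, and the paper simply cites the real-closed-field root property from \cite{BC} rather than invoking the intermediate value theorem explicitly.
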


\begin{proof}
It follows from Theorem \ref{Sylvester} and Corollary \ref{unique_apolar} that the projectively unique Sylvester form of degree 3 for $f$ is given by
\begin{equation*}
h(x,y)=(\beta_1 x - \alpha_1y)(\beta_2 x - \alpha_2y)(\beta_3 x - \alpha_3y) \in K[x,y].
\end{equation*}
 Notice that $u$ equals the discriminant of $h(x,1).$ By the hypothesis $h$ does not split over $K;$ therefore, $\sqrt{u} \not\in K.$ Since $h$ is an odd degree form over a real closed field, it must have a factor  in $K[x,y]$ (see \cite[Theorem 1.2.2]{BC}). Thus, only one factor of $h(x,y)$ is in $K[x,y]$ and the other two are conjugates of each other in $K(\sqrt{u}).$
Note that every field automorphism which fixes $K$ permutes the summands in (\ref{E:real_closed}). If we consider the conjugation with respect to $\sqrt{u}$, then (\ref{E:real_closed}) has two summands which are conjugates of each other in $K(\sqrt{u})$ and a summand in $K[x,y]$. 
 \end{proof}


\end{section}


\begin{section}{Examples}\label{5}

Let $f$ be a binary form of degree $d\geq 5$ in $K[x,y]$ and $L_\mathbb{C}(f)=3.$ Then it follows from Theorem \ref{classification} that there exist a field extension $S/K$ with $L_K(f)=3.$ Either $S=K$ or else $S/K$ has degree 2,3, or 6. 
In this section, we give examples for each case of $S/K$  and additional one showing that if $d\leq 4$, there can be infinitely many  representations of length~3.

\begin{example} This example displays a form falling into the case where $S/K~\text{has degree 2}$.
\begin{equation*} 
\text{Let}~f(x,y)= (1+2 \sqrt{2}) x^5-25 x^4y+(60 \sqrt{2} +10) x^3y^2-170 x^2y^3+(90\sqrt{2} +5)x y^4~-53y^5
\end{equation*}
First, with $r=2$, we see that the matrix from (\ref{linear_system}) is nonsingular, hence $L_\mathbb{C}(f) \geq 3.$ On taking $r=3$, we get the Sylvester form:
\begin{equation*}
h(x,y) =3x^3-3x^2y - x y^2 + y^3= ( y - \sqrt{3}x) ( y + \sqrt{3} x )( y - x).
\end{equation*}
 Thus, we arrive the following conclusion: $L_S(f)=3~\text{if and only if}~\mathbb{Q}(\sqrt{2},\sqrt{3}) \subseteq S$ with the corresponding representation:
\begin{equation*}
f(x,y)=(\sqrt{2} + \sqrt{3}) ( x - \sqrt{3}y ) ^5 + (\sqrt{2} - \sqrt{3})( x + \sqrt{3} y ) ^5 + ( x+y)^5.
\end{equation*}
Notice that the above representation has two summands which are conjugates of each other under the conjugation with respect to $\sqrt{3}$ in  $\mathbb{Q}(\sqrt{2})$ and a summand in $\mathbb{Q}(\sqrt{2})[x,y]$.\

In particular, if $S=\mathbb{Q}(\sqrt{2},\sqrt{3}),$ then $[S:\mathbb{Q}(\sqrt{2})]=2$ and $L_S(f)=3.$
\end{example}
\begin{example}
\noindent Let $f(x,y)= -15 x^5 + 90 x^4y -30 x^3 y^2+ 60 x^2 y^3+ 3y^5.$  If we set $r=2$, then the solution to the linear system in (\ref{linear_system}) is trivial, so $L_\mathbb{C}(f) \geq 3.$  

If we set $r=3$ in (\ref{linear_system}), then up to a scalar multiple $h(x,y)= x^3- 3x y^2 + y^3.$ We can factorize $h(x,y)$ by using the trigonometric identity $4 \cos^3(\theta) - 3 \cos(\theta) = \cos(3\theta)$:
\begin{equation*}
h(x,y)= (x-  2 \cos{\tfrac{2\pi}{9}}y)(x- 2 \cos{\tfrac{4\pi}{9}} y)(x- 2 \cos{\tfrac{8\pi}{9}} y).
\end{equation*}
Therefore, $L_S(f)=3~\text{if and only if}~\mathbb{Q}(\cos{\tfrac{2\pi }{9}}) \subseteq S$ with the $\mathbb{C}$-minimal representation:
\begin{equation*}
f(x,y)= ( y +  2 x \cos{\tfrac{2\pi}{9}})^5 + ( y+  2 x \cos{\tfrac{4\pi}{9}})^5 +  (y+ 2 x \cos{\tfrac{8\pi}{9}})^5.
\end{equation*}
If we let $S=\mathbb{Q}(\cos{\tfrac{2\pi }{9}}),$ then we have $[S:\mathbb{Q}]=3$ and $L_S(f)=3.$
\end{example}

\begin{example}Let $f(x,y)= 3 x^7 + 210 x^4 y^3 + 84x y^6.$ Then, $(f^{\perp})_2$ is empty  by Theorem~\ref{Sylvester}; thus, the Waring rank of $f$ is at least 3. The Sylvester form of degree 3 is 
\begin{equation*}
h(x,y)=  y^3-2x^3 = ( y -\sqrt[3] {2} x) ( y -\sqrt[3] {2} \omega x) (y-\sqrt[3] {2} \omega^2 x), ~  \omega= e^{\frac{ 2 \pi i}{3}}.
\end{equation*}
 Note that $h$ splits over $\mathbb{Q }(\sqrt[3] {2} ,\sqrt {-3} ).$  The $\mathbb{C}$-minimal representation of $f$ is given by
\begin{equation*}
f(x,y)=( x+ \sqrt[3] {2} y ) ^7+( x+ \sqrt[3] {2} \omega y ) ^7 +( x+ \sqrt[3] {2} \omega^2 y ) ^7. 
\end{equation*}
Let $S=\mathbb{Q }(\sqrt[3] {2} ,\sqrt {-3} ),$ then $[S:\mathbb{Q}]=6$ and $L_S(f)=3.$
\end{example}

If the degree of a binary form is less than 5, then the Sylvester form of degree 3 does not need to be unique. There can be infinitely many representations of length~3.

\begin{example}\label{quartic} Let $f(x,y)= ( x^2 + y^2) ^2,$ then by  \cite[Corollary 5.6] {Re1}  $L_S(f)=3$ if and only if $\mathbb{Q}(\sqrt{3} ) \subseteq  S$ with the minimal representations 
\begin{equation*}
( x^2 + y^2 )^2=\frac{1}{18}\sum\limits_{i=0}^{2}(\cos (\tfrac{i\pi}{3} + \theta)x + \sin (\tfrac{i\pi}{3} + \theta)y ) ^4,~~ \theta \in \mathbb{C}.
\end{equation*}

\end{example}
\end{section} 

\section{Acknowledgment}

We would like to express our gratitude to Zach Teitler. His insightful comments contributed significantly to the improvement of this work.  The author was supported by UIUC Campus Research Board Grant.


\bibliographystyle{elsarticle-num}
  \bibliography{reference}

\end{document}